\theoremstyle{plain}
\newtheorem{theorem}{Theorem}%[section]
\newtheorem{lemma}[theorem]{Lemma}
\newtheorem{proposition}[theorem]{Proposition}
\theoremstyle{definition}
\newtheorem{example}[theorem]{Example}
\newtheorem{definition}[theorem]{Definition}
\newtheorem{remark}[theorem]{Remark}
\theoremstyle{remark}
\begin{document}
\title{Divisorial Multiplicative Lattices}

\author{Tiberiu Dumitrescu and Mihai Epure}

\address{Facultatea de Matematica si Informatica,University of Bucharest,14 A\-ca\-de\-mi\-ei Str., Bucharest, RO 010014,Romania}
\email{tiberiu\_dumitrescu2003@yahoo.com}

\address{Simion Stoilow Institute of Mathematics of the Romanian Academy, Research unit 6, P. O. Box 1-764, RO-014700 Bucharest, Romania}
\email{epuremihai@yahoo.com, mihai.epure@imar.ro}

\thanks{2020 Mathematics Subject Classification: Primary 06B23, Secondary 13F05.}
\thanks{Key words and phrases: divisorial integral domain, multiplicative lattice domain, divisorial lattice domain.}

\begin{abstract}\noindent
We prove several fundamental results about divisorial integral domains in the setup of multiplicative lattices.
\end{abstract}

\maketitle
%
%
%\section{Divisorial closure in lattices}

In \cite{He} Heinzer initiated the study of integral domains in which all nonzero ideals are divisorial (later called {\em divisorial domains}) establishing their   fundamental properties. 
He showed \cite[Theorems 2.4 and 2.5]{He} that a divisorial domain $D$ is {\em h-local}, that is, 
$D/P$ (resp. $D/(x)$) is quasilocal (resp. semi-quasilocal) for each nonzero prime ideal $P$ (resp. for each $x\in D-\{0\}$). He also showed 
\cite[Theorem 5.1]{He} that an integrally closed domain $D$ is divisorial iff $D$ is an h-local Pr\"ufer domain whose maximal ideals are finitely generated. He also showed \cite[Proposition 5.5]{He} that a completely integrally closed domain $D$ is divisorial iff $D$ is a Dedekind domain.

The aim of this short note is to rewrite  most of the results in \cite{He}  in the language of Abstract Ideal Theory in the sense of \cite{D} and \cite{A}.
We recall some standard terminology and facts.
 A {\em multiplicative lattice} is a complete lattice  $(L,\leq)$ (with bottom element $0$ and top element $1$) which is also  a commutative monoid with identity $1$ (the top element) such that
 $$a( \bigvee_\alpha b_\alpha) = \bigvee_\alpha (ab_\alpha)  \mbox{ \ for each } a,b_\alpha\in L.$$
When $x,y\in L$ and $x\leq y$, we say that $x$ is {\em below} $y$ or that $y$ is {\em above} $x$.
An element $c\in L$ is {\em compact} if 
 \begin{center}
$S\subseteq L$ and $c\leq \bigvee S$  implies $c\leq \bigvee T$ for some finite subset $T\subseteq S.$
\end{center}
\noindent
An element  in $x\in L$ is {\em proper} if $x\neq 1$.
When $1$ is compact, every proper element is below some {\em maximal} element (i.e. maximal in $L-\{1\}$). 
Let $Max(L)$ denote the set of maximal elements of $L$. 
%By ``$(L,m)$ is  local'', we mean that $Max(L)=\{m\}$.
An element $p\in L-\{1\}$ is {\em prime} if  $xy \leq p$ (with $x,y \in L$) implies $x \leq p$ or $y\leq p$.   Every maximal element is prime. $L$ is a {\em lattice domain} if $0$ is a prime element.
For  $x,y\in L$,   $(y : x)$ denotes the element $\bigvee \{a \in L\ | \ ax \leq y\}$.
An element $x\in L$ is {\em principal} if it is both 
{\em meet-principal}, i.e. satisfies
$$y \wedge zx = ((y : x) \wedge z) x  \ \ \  \mbox{ for all }  y, z \in L $$
and 
{\em join-principal}, i.e. satisfies
$$y \vee (z : x) = ((yx \vee z) : x) \ \ \mbox{ for all }  y, z \in L .$$
 If $x$ and $y$ are principal elements, then so is $xy$. Conversely, if  $L$ is a lattice domain and $xy$ is a nonzero principal element, then $x$ and $y$ are principal. 
In a lattice domain, every nonzero principal element $x$ is cancellative, that is $xy = xz$ ($y,z\in L$) implies $y = z$. 

The lattice  $L$ is {\em principally generated} if every element is a join of principal elements.
$L$ is a {\em $C$-lattice} if $1$ is compact, the set of compact elements  is closed under  multiplication  and  every element is a join of compact elements.
  In a $C$-lattice every principal element is compact.
 
 The $C$-lattices have a well behaved localization theory (see \cite{JJ}).  Let $L$ be a $C$-lattice and $L^*$ the subset of its compact elements. For $p \in L$ a prime element and $x\in L$, 
 the {\em localization} of $x$ at $p$ is
 $$x_p = \bigvee \{a \in L^* \ |\ as \leq x \mbox{ for some } s \in L^* \mbox{ with } s\not\leq p\}.$$  
 Then $L_p:=\{ x_p;\ x\in L\}$ is  again a lattice with multiplication $(x,y)\mapsto (xy)_p$, join $\{ (b_\alpha)\}\mapsto 
 (\bigvee b_\alpha)_p $ and meet $\{ (b_\alpha) \}\mapsto  (\bigwedge b_\alpha)_p$.
For $x,y\in L$, we have:
$(i)$ $x\leq x_p$, $(x_p)_p=x_p$, 
 $(x \wedge y)_p = x_p \wedge y_p $,
 and $x_p = 1$ iff $x\not\leq p$.
$(ii)$  $x=y$ iff $x_m=y_m$ for each $m\in Max(L)$.
$(iii)$ $(y : x)_p \leq (y_p : x_p)$ with equality if $x$ is compact.
$(iv)$ The set of compact elements of $L_p$ is $\{c_p\ |\ c\in L^*\}$.
% 
%$(v)$ A compact element $x$ is principal iff $x_m$ is principal for each $m\in Max(L)$. For basic results or teminology our references are \cite{G}, \cite{D} and \cite{A}.
\begin{center}
{\em From now on, by term {\em lattice} we mean a principally generated C-lattice.}
\end{center}
Our first task is to
 define an "inside" divisorial closure  on a lattice  domain (for a more natural "outside" approach see  for instance \cite{F} and \cite{M}).

\begin{lemma} \label{2} 
Let $L$ be a lattice domain and  $a,x,y,z\in L-\{0\}$ such that $x,y,z$ are principal elements and $x,y\leq a$. Then 

$(i)$ $z(y:a) = (zy:a)$,

$(ii)$ $(x:(x:a)) = (y:(y:a)).$
\end{lemma}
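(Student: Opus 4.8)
The plan is to prove $(i)$ first and then derive $(ii)$ from it by a symmetry trick. For $(i)$, the inequality $z(y:a)\le (zy:a)$ is immediate: since $(y:a)a=\bigvee\{ca:ca\le y\}\le y$, multiplying by $z$ gives $z(y:a)\,a\le zy$. For the reverse inequality, because $L$ is principally generated it suffices to show that every principal $b$ with $b\le (zy:a)$ satisfies $b\le z(y:a)$; we may assume $b\neq 0$. From $ba\le zy$ together with $y\le a$ we get $by\le ba\le zy$, hence $(b\vee z)y=by\vee zy=zy$, and cancelling the nonzero principal element $y$ yields $b\le z$. I expect this last point --- extracting $b\le z$ from the hypothesis by multiplying by $y$, using $y\le a$, and cancelling --- to be the crux of the argument.

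Once $b\le z$ is known, the rest is routine. By meet-principality of $z$ (with parameters $1$ and $b$) we have $b=b\wedge z=(b:z)z$; writing $b':=(b:z)$, the element $b=b'z$ is a nonzero principal element, so by the converse recalled in the preliminaries $b'$ is principal. Cancelling $z$ in $b'za=ba\le zy$ gives $b'a\le y$, i.e.\ $b'\le (y:a)$, whence $b=b'z\le z(y:a)$. Taking the join over all such $b$ gives $(zy:a)\le z(y:a)$, which proves $(i)$.

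For $(ii)$ the strategy is to show that $(w:(w:a))$ takes the same value for $w=x$ and for $w=xy$; by symmetry in $x,y$ this forces $(x:(x:a))=(y:(y:a))$. Observe that $xy$ is a nonzero principal element with $xy\le x\cdot 1=x\le a$, so all the relevant hypotheses are in force. Applying $(i)$ with the substitution ``$z$''$=y$, ``$y$''$=x$ gives $(xy:a)=y(x:a)$. Next, cancellativity of the nonzero principal element $y$ yields the identity $(xy:yt)=(x:t)$ for every $t\in L$ (from $c\,yt\le xy\iff y(ct)\le yx\iff ct\le x$). Combining,
$$(xy:(xy:a))=(xy:y(x:a))=(x:(x:a)).$$
Running the same computation with the roles of $x$ and $y$ interchanged gives $(xy:(xy:a))=(y:(y:a))$, and $(ii)$ follows. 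The only mild bookkeeping point is to check in each application of Lemma~\ref{2}$(i)$ that the element playing the role of ``$x$'' is principal, nonzero and below $a$; since $x,y\le a$ throughout, every application is legitimate.
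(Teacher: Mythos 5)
Your proof is correct. Part (ii) is essentially identical to the paper's: both apply (i) with the roles of the variables swapped to get $(xy:a)=y(x:a)$, then use $(xy:y(x:a))=(x:(x:a))$ and the symmetry of $x,y$. For part (i), however, the paper argues globally rather than element-wise: from $y\le a$ it gets $(zy:a)\le (zy:y)=z$ directly, and meet-principality of $z$ applied to the whole element $(zy:a)$ then yields $(zy:a)=z((zy:a):z)=z(zy:az)=z(y:a)$ in one line. Your version reduces to a principal $b\le (zy:a)$, proves $b\le z$ by multiplying by $y$ and cancelling, and then factors $b=(b:z)z$; this is the element-wise shadow of the same idea, paid for with an extra appeal to principal generation and to cancellation of nonzero principal elements in \emph{inequalities} (e.g.\ cancelling $z$ in $b'za\le zy$, which is legitimate but deserves the one-line justification that it amounts to $z(b'a\vee y)=zy$). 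Both routes are sound; the paper's is shorter, yours is more self-contained at the level of principal elements. One small cleanup: you never actually use that $b'=(b:z)$ is principal --- only that $b'a\le y$ --- so the appeal to the converse factorization result from the preliminaries can be dropped.
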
 
\begin{proof} 
$(i)$ Since $(zy:a) \leq (zy:y) = z$ and $z$ is principal, we have 
$$(zy:a) = z((zy:a):z) = z(zy:az) = z(y:a).$$
$(ii)$ By   $(i)$ we have
$$(xy:(xy:a)) = (xy:y(x:a)) = (x:(x:a)).$$ As  $x,y$ play symmetric roles, we are done.
\end{proof}

Lemma \ref{2} justifies the following key definition.

\begin{definition} %\label{} 
Let $L$ be a lattice domain. For $a\in L-\{0\}$, we  define its {\em divisorial closure} (or {\em v-closure}) by 
$$a_v := (x:(x:a)) \mbox{ for some principal element }  x \leq a. $$
Say that $a$ is divisorial if $a=a_v$. For completness, we postulate that $0$ is divisorial.
\end{definition}

Note that our definition agrees with the well-known divisorial closure on an integral domain. We list some basic properties.

\begin{proposition} \label{81} 
Let $L$ be a lattice domain and  $a,b\in L-\{0\}$. We have

$(i)$  $a \leq a_v$ (so $1$ is divisorial).

$(ii)$ If $x\leq a$ is a nonzero principal element, then $(x:a)$ is a divisorial element (so all nonzero principal elements are divisorial).

$(iii)$ If $a \leq b$, then $a_v \leq b_v$. 

$(iv)$ $(xa)_v = xa_v$ for each nonzero principal element  $x$.

$(v)$ $(a_v)_v = a_v$.

$(vi)$ If $(a_i)_{i\in I}$ is a family of divisorial elements of $L$, then 
$\bigwedge_{i\in I} a_i$ is divisorial.
\end{proposition}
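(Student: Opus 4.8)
The plan is to deduce all six statements from three elementary facts about residuation, used through the formula $a_v=(x:(x:a))$ for an arbitrary nonzero principal $x\le a$ (Lemma \ref{2}(ii)). First, $(x:a)a\le x$ for all $x,a$ --- expand $(x:a)=\bigvee\{c : ca\le x\}$ and distribute the product over the join --- so that $a\le(x:(x:a))$. Second, $(x:-)$ is order-reversing, hence $a\le b$ implies $(x:a)\ge(x:b)$ and therefore $(x:(x:a))\le(x:(x:b))$. Third, one has the identity
\[
(x:(x:(x:b)))=(x:b)\qquad\text{for all }x,b\in L,
\]
which follows from the first fact applied once with $b$ replaced by $(x:b)$, giving ``$\le$'', and once with $b$ itself and then residuating, giving ``$\ge$''. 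I also use that in a principally generated lattice every nonzero element lies above a nonzero principal element, and that $x\le(x:a)$ always (because $xa\le x\cdot 1=x$), so residuals of nonzero principal elements are nonzero.

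Granting these, five of the six parts are short. Part $(i)$ is precisely $a\le(x:(x:a))$; in particular $1\le 1_v\le 1$, so $1$ is divisorial. For $(iii)$, pick a nonzero principal $x\le a\le b$ and apply order-reversal twice. For $(ii)$, set $b=(x:a)$; then $0\ne x\le b$, so $b_v=(x:(x:b))=(x:(x:(x:a)))=(x:a)=b$ by the identity. For $(v)$, take a nonzero principal $x\le a\le a_v$; the identity gives $(x:a_v)=(x:(x:(x:a)))=(x:a)$, whence $(a_v)_v=(x:(x:a_v))=(x:(x:a))=a_v$. For $(vi)$, if $a:=\bigwedge_i a_i=0$ it is divisorial by convention; otherwise pick a nonzero principal $x\le a$, so $x\le a_i$ and $(a_i)_v=(x:(x:a_i))=a_i$, while order-reversal gives $a_v=(x:(x:a))\le(x:(x:a_i))=a_i$ for every $i$; thus $a_v\le a$, and $(i)$ finishes.

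The substantive part is $(iv)$, and that is where I expect the work to lie. Fix a nonzero principal $y\le a$; then $xy$ is a nonzero principal element below $xa$, so $(xa)_v=(xy:(xy:xa))$. The first step is to identify $(xy:xa)=(y:a)$: one inequality is order-preservation of multiplication by $x$, and the reverse uses that the nonzero principal $x$ is cancellative in the lattice domain $L$, hence reflects inequalities ($xu\le xv$ forces $u\le v$, by distributing and cancelling). This reduces the claim to $(xy:(y:a))=x\,a_v=x(y:(y:a))$, which is exactly Lemma \ref{2}(i) applied with its element ``$a$'' taken to be $(y:a)$ --- legitimate since $(y:a)\ne 0$ and $y$ is a nonzero principal element below $(y:a)$. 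The only points needing care are the verification of the hypotheses of Lemma \ref{2} under this substitution and the cancellation step; the rest is routine residual bookkeeping.
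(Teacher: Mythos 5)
Your proof is correct and takes essentially the same route as the paper's: every part reduces to standard residuation identities (the triple-residual identity for $(ii)$ and $(v)$, antitonicity for $(iii)$ and $(vi)$), and part $(iv)$ is handled exactly as in the paper by applying Lemma \ref{2}(i) with its ``$a$'' replaced by $(y:a)$. The only cosmetic difference is that you justify the intermediate step $(xy:xa)=(y:a)$ explicitly via cancellativity of the nonzero principal element $x$, a step the paper leaves implicit.
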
 
\begin{proof} 
$(i)$ Let  $x\leq a$ be a nonzero principal element. As $a(x:a) \leq x$, we get $a\leq (x:(x:a)) = a_v$.
$(ii)$ We have $(x:a)_v=(x:(x:(x:a))) = (x:a)$. 
\\ $(iii)$ Let $x\leq a$ be a nonzero principal element. Since $a \leq b$, we have $(x:a)\geq (x:b)$, so $a_v = (x:(x:a))\leq  (x:(x:b)) = b_v$.
$(iv)$ Let $x,y$ be   nonzero principal elements with $y\leq a$.
Then   $(xa)_v = (xy:(xy:xa)) = (xy:(y:a)) = x(y:(y:a)) =xa_v$, where the third equality comes from Lemma \ref{2}.
$(v)$ follows from $(ii)$
and
$(vi)$ follows from  $(\bigwedge_{i\in I} a_i)_v \leq \bigwedge_{i\in I} {(a_i)}_v=\bigwedge_{i\in I} a_i.$ 
\end{proof}

\begin{example}  \label{15}
If $L$ is a lattice domain and  $0 < m^2 \leq x < m$ where $m$ is a  maximal element and $x$   a principal element, then $m=(x:m)$ is divisorial. As a specific example, $M=(2,1+\sqrt{-3})$ is a divisorial ideal of $\mathbb{Z}[\sqrt{-3}]$ since $M^2\subseteq (2)$.
\end{example}

\begin{example} 
Consider  the semiring $\mathbb{N}$ of non-negative integers. 
It is well known that the nonprincipal ideals of $\mathbb{N}$ are scalar multiples of numerical semigroups. One can see that   the divisorial closure of a numerical semigroup is the whole $N$. 
Thus in the   ideal lattice of $\mathbb{N}$ the divisorial elements are only the principal elements.
\end{example}

\begin{proposition} \label{12} 
Let $L$ be a lattice domain and $a\in L-\{0\}$. Then  
$$a_v = \bigwedge \{ (x:y)\ |\ x,y \mbox{ are principal elements and\  }  a\leq (x:y)       \}.
$$
\end{proposition}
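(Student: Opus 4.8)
The plan is to prove the two inequalities ``$\leq$'' and ``$\geq$'' separately, writing $R$ for the meet on the right-hand side (the defining set is nonempty: it contains $(x:x)=1$ for any principal $x$, so $R$ makes sense). For $a_v\leq R$, I would fix principal elements $x,y$ with $a\leq(x:y)$ and show $a_v\leq(x:y)$; taking the meet over all such pairs then yields the inequality. The degenerate terms cause no trouble: if $y=0$ then $(x:y)=1$, while if $x=0$ and $y\neq0$ then $(x:y)=(0:y)=0$, which would force $a=0$ by cancellativity of nonzero principal elements in a lattice domain, contradicting $a\neq0$. So assume $x,y\neq0$; then $a\leq(x:y)$ says $ay\leq x$, with $ay\neq0$ by cancellativity, and Proposition \ref{81}(iii),(ii) give $(ay)_v\leq x_v=x$, while Proposition \ref{81}(iv) gives $(ay)_v=a_vy$; hence $a_vy\leq x$, i.e.\ $a_v\leq(x:y)$.

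For $R\leq a_v$, the idea is to exhibit a subfamily of the terms defining $R$ whose meet is already $a_v$. I would choose a nonzero principal element $x\leq a$ (one exists since $L$ is principally generated and $a\neq0$), so that $a_v=(x:(x:a))$, and then use principal generation again to write $(x:a)=\bigvee_\alpha y_\alpha$ with each $y_\alpha$ principal. Since multiplication distributes over arbitrary joins, residuation turns this join into a meet: $(x:\bigvee_\alpha y_\alpha)=\bigwedge_\alpha(x:y_\alpha)$, so $a_v=\bigwedge_\alpha(x:y_\alpha)$. Now each $y_\alpha\leq(x:a)$ means $ay_\alpha\leq x$, i.e.\ $a\leq(x:y_\alpha)$, so every $(x:y_\alpha)$ is one of the elements occurring in the meet defining $R$; hence $R\leq\bigwedge_\alpha(x:y_\alpha)=a_v$. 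Combining the two inequalities yields $a_v=R$.

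I expect the only genuine point to watch is the identity $(x:\bigvee_\alpha y_\alpha)=\bigwedge_\alpha(x:y_\alpha)$ --- the standard fact that residuation $(x:{-})$ is right adjoint to multiplication, immediate from the distributive law in a multiplicative lattice --- together with the small bookkeeping over the cases $x=0$ or $y=0$ (and the empty-family case, where $a_v=1$ and the inequality $R\leq a_v$ is automatic); everything else is a direct assembly of Proposition \ref{81} and the standing hypothesis that $L$ is a principally generated $C$-lattice.
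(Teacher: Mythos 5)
Your proof is correct and follows essentially the same strategy as the paper: both directions hinge on the definition $a_v=(x:(x:a))$ together with the principal generation of $(x:a)$, and your identity $(x:\bigvee_\alpha y_\alpha)=\bigwedge_\alpha(x:y_\alpha)$ is just a compact packaging of the paper's element-wise argument for $d\leq a_v$. The only cosmetic difference is in the easy inequality, where you invoke Proposition \ref{81}$(iv)$ to get $a_vy\leq x$ directly, while the paper instead observes that the right-hand side is a meet of divisorial elements and applies Proposition \ref{81}$(ii)$, $(vi)$.
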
 
\begin{proof} 
The right hand side $d$  of  equality above is divisorial (cf. Proposition \ref{81}) and $a\leq d$, hence   $a_v\leq d$.
Conversely, let $x,y$ be   nonzero principal elements with $y\leq d$, 
$x\leq a$. For each nonzero principal element $z\leq (x:a)$  we get $a\leq (x:z)$, so $y\leq d\leq (x:z)$, hence $yz\leq x$.  It follows that $y\leq (x:(x:a))=a_v$, thus $d\leq a_v$.
\end{proof}

\begin{definition} %\label{} 
Say that a lattice domain $L$ is {\em divisorial} if its  elements  are divisorial. So an integral domain $D$ is divisorial iff  so is the ideal lattice of $D$.
\end{definition}

Clearly a {\em Dedekind lattice} (i.e. a lattice domain whose elements are principal (see \cite{AJa})) is a divisorial lattice.

\begin{example}  \label{17}
Consider the ideal lattice of the local domain 
$\mathbb{Z}[\sqrt{-3}]_{(2,1+\sqrt{-3})}$. Set $a=(2)$, $b=(1+\sqrt{-3})$, 
$c=(1-\sqrt{-3})$,  $m=(2,1+\sqrt{-3})$. Note that
$a^2=bc$, $b^2=ac$, $c^2=ab$, $m^2=am=bm=cm$. In fact $L$ consists of 
$$1 > m > a,b,c > am >  a^2,ab,ac > a^2m >  a^3,a^2b,a^2c > a^3m > ...\ 
$$
where the nonprincipal elements are $a^nm$, $n\geq 0$. As $m=(a:m)$ is divisorial, $L$ is divisorial.
See \cite{PZ} for more examples of this type.
\end{example}

\begin{remark}  \label{16}
Let $L$ be a lattice domain and let $H$ be the cancellative monoid consisting of the principal elements of $L$. 
We observe the following link between the $v$-closures in $L$ and $H$.
Denote by $M_{v'}$  the $v$-closure of some 
$M\subseteq H$ (see \cite[(11.4)]{Ha}).
Since Proposition \ref{12} can be restated as
$$a_v = \bigvee \{ x\in H\ |\ (y,z\in H \mbox{ and \ }  za\leq y) \mbox{ implies  }   zx\leq y     \}
$$
we see that
$$ a_v = \bigvee \{ x\in H\ |x\leq a\}_{v'}.
$$
Using this fact  one can see that if $L$ is divisorial, then $L$   is isomorphic to the lattice of $v$-ideals of $H$. For instance, the ideal lattice in Example \ref{17} is isomorphic to the lattice of $v$-ideals of
the monoid $$ < a,b,c\ |\  a^2=bc,\ b^2=ac,\ c^2=ab>.
$$
\end{remark}

\begin{remark} \label{3} 
Let $L$ be a divisorial lattice domain.
Clearly, if $a,b,x\in L$ and   $x$ is a principal element $\leq a \wedge b$,  
then $a \leq b$ iff $(x:a) \geq (x:b)$, so
$a = b$ iff $(x:a) = (x:b)$
\end{remark}

Next we  extend to lattices the  results (2.1), (2.3), (2.4), (2.5), (3.6), (5.1) and (5.5) from \cite{He}.

\begin{lemma} \label{4} 
Let $L$ be a divisorial lattice domain.
Suppose  $(a_i)_{i\in I}$ is a family of  elements of $L$ and $x\leq \bigwedge_{i\in I} a_i$ is a nonzero principal element. Then  
$$(x:\bigwedge_{i\in I} a_i) = \bigvee_{i\in I} (x:a_i).$$
\end{lemma}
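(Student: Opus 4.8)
The plan is to reduce the identity $(x:\bigwedge_{i\in I} a_i) = \bigvee_{i\in I}(x:a_i)$ to a statement about the divisorial closures, and then apply Remark \ref{3}. The inclusion $\bigvee_{i\in I}(x:a_i) \leq (x:\bigwedge_{i\in I} a_i)$ is routine: if $y\leq (x:a_i)$ for some $i$, then $y\,\bigwedge_{j} a_j \leq y\,a_i \leq x$, so $y \leq (x:\bigwedge_j a_j)$; taking joins gives the inclusion. So the real content is the reverse inequality, which I would prove by showing the two sides have the same quotient into $x$, i.e.\ that $(x:\bigvee_{i\in I}(x:a_i)) \leq (x:(x:\bigwedge_{i\in I}a_i))$, and then invoking divisoriality.

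First I would set $b := \bigvee_{i\in I}(x:a_i)$ and compute $(x:b)$. Since $x$ is compact and the localization/quotient machinery behaves well, $(x:b) = (x:\bigvee_i (x:a_i)) = \bigwedge_i (x:(x:a_i))$ — this is the standard fact that $(x:\bigvee S) = \bigwedge\{(x:s)\mid s\in S\}$, which holds in any multiplicative lattice directly from the definition of residuation and the join-distributivity of multiplication. Now each $(x:(x:a_i))$ is exactly $(a_i)_v$ by definition of the $v$-closure (using that $x\leq a_i$ is a nonzero principal element below $a_i$), and since $L$ is divisorial, $(a_i)_v = a_i$. Therefore $(x:b) = \bigwedge_{i\in I} a_i$. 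Applying residuation once more, $(x:(x:b)) = (x:\bigwedge_{i\in I} a_i)$, and since $x\leq b$ is a nonzero principal element (note $x \leq (x:a_i)$ as $x\,a_i \leq x$... wait, we need $x \leq \bigwedge a_i$, which is given, hence $x \leq a_i$ so $x \cdot x \leq x \cdot a_i$, giving $x \leq (x:a_i) \leq b$), the left side is $b_v = b$ by divisoriality. Hence $b = (x:\bigwedge_{i\in I} a_i)$, which is exactly the claim.

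The step I expect to require the most care is the identity $(x:\bigvee_i(x:a_i)) = \bigwedge_i(x:(x:a_i))$: while $(x:\bigvee S)\leq(x:s)$ for each $s$ is immediate, the reverse — that an element below every $(x:s)$ is below $(x:\bigvee S)$ — uses that if $c\,s \leq x$ for all $s\in S$ then $c\,(\bigvee S) = \bigvee_s (cs) \leq x$, which is exactly the join-distributivity axiom of a multiplicative lattice, applied to an infinite join. So this is clean, but worth stating explicitly since it is the engine of the proof. Everything else is bookkeeping: unwinding the definition of $v$-closure, checking $x$ sits below each $(x:a_i)$ so that it is a legitimate "test element" for the $v$-closure of $b$, and using divisoriality to collapse $v$-closures. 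After that the result drops out by comparing $(x:\cdot)$ on both sides via Remark \ref{3}.
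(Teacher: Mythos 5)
Your proposal is correct and is essentially the paper's own proof: the paper writes the same argument as a single chain $\bigvee_i (x:a_i)=(x:(x:\bigvee_i(x:a_i)))=(x:\bigwedge_i(x:(x:a_i)))=(x:\bigwedge_i a_i)$, using exactly your three ingredients (divisoriality of $\bigvee_i(x:a_i)$ with test element $x$, the residuation identity $(x:\bigvee S)=\bigwedge_{s\in S}(x:s)$, and $(x:(x:a_i))=(a_i)_v=a_i$). The only cosmetic remark is that $x\leq(x:a_i)$ needs no fuss: $xa_i\leq x\cdot 1=x$ always, which was your first (and correct) justification.
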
 
\begin{proof} 
We have $$ \bigvee_{i\in I} (x:a_i)=(x:(x : \bigvee_{i\in I} (x:a_i))) = (x:\bigwedge_{i\in I} (x:(x:a_i))) = (x:\bigwedge_{i\in I} a_i ).$$
\end{proof}

\begin{theorem} \label{5}
Let $L$ be a divisorial lattice domain, $a\in L-\{0,1\}$ and $p$ some maximal element $\geq a$. Then
 $$\bigwedge  \{b\in L\ |\ a\leq b \not\leq p\ \}\not\leq p$$
that is, there exists a smallest element, denoted $a(p)$, satisfying $a\leq a(p) \not\leq p.$ Moreover, if $a$ is a prime element, then $a(p)=1$.
\end{theorem}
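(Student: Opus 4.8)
The plan is to put $S:=\{b\in L\mid a\le b,\ b\not\le p\}$; since $p$ is proper we have $1\in S$, so $S\neq\emptyset$, and I set $a(p):=\bigwedge S$, so that $a\le a(p)$ is automatic. The first assertion then amounts exactly to $a(p)\not\le p$: once that is known, $a(p)\in S$, hence $a(p)$ is the smallest member of $S$, which is the claimed element. Throughout I fix a nonzero principal $x\le a$ (available since $a\neq0$ and $L$ is principally generated); note $x\le a\le p$.

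The first real step, and the place where divisoriality enters, is an auxiliary claim: \emph{in a divisorial lattice domain, if $0\neq x$ is principal and $x\le p$ with $p$ maximal, then $(x:p)\gneq x$}. To see this, recall $p$ is divisorial, so by Proposition \ref{12} it is a meet of elements $(x_i:y_i)$ with $x_i,y_i$ principal and $(x_i:y_i)\ge p$; by maximality of $p$ each such element equals $p$ or $1$, so $p=(x_1:y_1)$ for one such pair, and necessarily $y_1\not\le x_1$ (otherwise that colon would be $1$). From $py_1\le x_1$ I get $xy_1\cdot p\le xx_1$, so $xy_1\le(xx_1:p)$, while $xy_1\not\le xx_1$ since that would force $y_1\le x_1$ after cancelling the nonzero principal $x$. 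Hence $(xx_1:p)\gneq xx_1$; but $(xx_1:p)=x_1(x:p)$ by Lemma \ref{2}(i) (applied with the "$a$" of that lemma equal to $p$), and cancelling $x_1$ gives $(x:p)\gneq x$.

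Now the main argument. By the claim pick a principal $u$ with $u\le(x:p)$ and $u\not\le x$; then $up\le x$ and $u\not\le x$ force $(x:u)=p$, because $(x:u)\ge p$, $(x:u)\neq1$, and $p$ is maximal. Note that $S$ is closed under finite meets: the condition $\ge a$ obviously is, and if $b_1,b_2\not\le p$ then $b_1b_2\not\le p$ since $p$ is prime, so $b_1\wedge b_2\ge b_1b_2\not\le p$. Consequently, by Lemma \ref{4}, $(x:a(p))=(x:\bigwedge_{b\in S}b)=\bigvee_{b\in S}(x:b)$ is a \emph{directed} join (for $b_1,b_2\in S$ both $(x:b_1),(x:b_2)\le(x:b_1\wedge b_2)$ with $b_1\wedge b_2\in S$). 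Suppose, for contradiction, that $a(p)\le p$. Since $x\le a(p)\wedge p$, Remark \ref{3} gives $(x:p)\le(x:a(p))$, so $u\le(x:a(p))=\bigvee_{b\in S}(x:b)$; as $u$ is compact and the join is directed, $u\le(x:b)$ for a \emph{single} $b\in S$, i.e. $ub\le x$, i.e. $b\le(x:u)=p$, contradicting $b\in S$. Hence $a(p)\not\le p$, proving the first part.

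For the last assertion, suppose $a$ is prime but $a(p)\neq1$. Then $a(p)$ is proper, and since $a\le p$ while $a(p)\not\le p$ we get $a\lneq a(p)$, so $a(p)$ lies below some maximal $q$, necessarily $q\neq p$. Applying the part just proved at $q$ (legitimate, as $a\le a(p)\le q$) yields $a(q)$ with $a\lneq a(q)$, $a(q)\not\le q$, and $a(q)\le p$ (since $p\ge a$ and $p\not\le q$); symmetrically $a(p)\le q$. Minimality of $a(p)$ and of $a(q)$ then forces $a(p)\wedge a(q)=a$, whence $a(p)\,a(q)\le a(p)\wedge a(q)=a$; as $a$ is prime this gives $a(p)\le a$ or $a(q)\le a$, contradicting $a\lneq a(p)$ and $a\lneq a(q)$. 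I expect the identity $a(p)\wedge a(q)=a$ to be the genuine obstacle in this last part: that $a(p)\wedge a(q)$ lies below both $p$ and $q$ follows readily from minimality, but pushing it down to $a$ is where the primeness of $a$ must be exploited in full, and that is the computation I would have to carry out most carefully.
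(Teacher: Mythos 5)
Your proof of the first assertion is correct and is essentially the paper's argument: fix a nonzero principal $x\le a$, show $(x:p)\gneq x$, pick a principal $u\le (x:p)$ with $u\not\le x$, and combine Lemma \ref{4} with compactness of $u$ and closure of $S$ under finite meets to rule out $a(p)\le p$. (The paper obtains $(x:p)\gneq x$ in one line from Remark \ref{3} applied to the pair $1\neq p$; your detour through Proposition \ref{12} is a correct, if longer, substitute, and your ``directed join'' phrasing of the compactness step is equivalent to the paper's finite-subset argument.)

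The ``moreover'' part has a genuine gap, exactly where you suspected. The identity $a(p)\wedge a(q)=a$ does not follow from minimality: minimality of $a(p)$ only constrains elements $b$ with $a\le b\not\le p$ (it says each such $b$ lies above $a(p)$), and it says nothing about elements between $a$ and $p\wedge q$ --- which is where $a(p)\wedge a(q)$ lives, since $a(p)\le q$ and $a(q)\le p$ give $a(p)\wedge a(q)\le p\wedge q$. Nothing you have proved excludes an element $e$ with $a\lneq e\le a(p)\wedge a(q)$, and even the weaker inequality $a(p)\,a(q)\le a$, which is all your primeness argument really needs, is not established. The paper's proof of this part runs along different lines: choose a principal $y\le a(p)$ with $y\not\le p$ (possible since $L$ is principally generated and $a(p)\not\le p$); then $a\vee y^2$ lies in the defining set of $a(p)$, so $y\le a(p)\le a\vee y^2$, whence
$1=((a\vee y^2):y)=(a:y)\vee y=a\vee y\le a(p)$,
using join-principality of $y$ and $(a:y)=a$ (the latter because $a$ is prime and $y\not\le a$). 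You would need to replace your two-maximal-element argument by something of this kind to close the gap.
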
 
\begin{proof} 
Set $B=\{b\in L\ |\ a\leq b \not\leq p\ \}.$ We have to show that $\bigwedge  B  \not\leq p$.
Note that $1\in B$, so $B$ is nonempty.
Fix  a  nonzero principal element $x\leq a$.
By Remark \ref{3} we have  $x =(x:1) < (x:p)$, so we may pick 
a principal element $y\leq  (x:p)$ with $y\not\leq x$.
Let $C$ be a finite subset of $B$ (so clearly $\bigwedge C \not\leq p$, that is, $p \vee \bigwedge C=1$).
If $y \leq (x:\bigwedge C)$, then $y=y\cdot 1 = y(p \vee \bigwedge C) \leq x$,  a contradiction. It remains that 
$y \not\leq  (x:\bigwedge C) = \bigvee\{(x:c)\ |\ c \in C\}$, cf. Lemma \ref{4}. As $y$ is compact, we get 
 $y \not\leq  \bigvee \{(x:b);\ b \in B\} = (x:\bigwedge B)$, cf. Lemma \ref{4}. So $(x:p) \not\leq    (x:\bigwedge B)$, thus  $\bigwedge B \not\leq  p$, cf. Remark \ref{3}.
 
Suppose now that $a$ is prime and let $y\leq a(p)$ be a principal element with $y\not\leq p$. As $a\leq a\vee y^2\not\leq p$, the first part  shows that $y\leq a(p)\leq a\vee y^2$. As $y$ is principal, $a$ is prime  and $y\not\leq a$, we get $$1=(a\vee y^2):y =(a:y) \vee y = a\vee y\leq a(p)$$
so $a(p)=1$. 
\end{proof}

We continue to use notation   $a(p)$ established in Theorem \ref{5}. 

\begin{definition}
Say that a lattice domain $L$ is {\em h-local} if 

$(i)$ every  nonzero prime element of $L$ is below a   unique maximal element and 

$(ii)$ for every  $b\in L-\{0\}$, the set of maximal elements above $b$ is finite.
\\
So an integral domain $D$ is h-local iff  its ideal lattice is so.
\end{definition}

% ramane label 6

\begin{theorem} \label{11}
A divisorial lattice domain $L$ is h-local.
\end{theorem}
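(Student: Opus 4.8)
The plan is to verify the two defining conditions of h-locality for a divisorial lattice domain $L$, mimicking Heinzer's arguments \cite[Theorems 2.4 and 2.5]{He} but expressed through the machinery built above, chiefly Theorem \ref{5} and the element $a(p)$.

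For condition $(i)$, suppose $q$ is a nonzero prime element lying below two distinct maximal elements $p_1\neq p_2$. Fix a nonzero principal element $x\leq q$. By Theorem \ref{5} applied to $q$ and $p_1$, since $q$ is prime we have $q(p_1)=1$, i.e.\ $\bigwedge\{b\mid q\leq b\not\leq p_1\}=1$; similarly $q(p_2)=1$. The idea is to derive a contradiction by finding a single proper element above $q$ that fails to lie below either $p_1$ or $p_2$ — or more directly, to exploit that $1$ is the smallest element above $q$ not below $p_1$, yet $p_2$ itself satisfies $q\leq p_2\not\leq p_1$ (using $p_1\neq p_2$ and both maximal, so $p_2\not\leq p_1$). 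Then minimality forces $1\leq p_2$, contradicting $p_2\neq 1$. So condition $(i)$ is essentially immediate from the "moreover" clause of Theorem \ref{5}.

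For condition $(ii)$, fix $b\in L-\{0\}$ and a nonzero principal element $x\leq b$; it suffices to bound the number of maximal elements above $x$. Suppose $p_1,\dots,p_n$ are distinct maximal elements above $x$. For each $i$ consider $x(p_i)$, the smallest element with $x\leq x(p_i)\not\leq p_i$; since each $p_j$ with $j\neq i$ is maximal and distinct from $p_i$ we have $p_j\not\leq p_i$, while $x\leq p_j$, hence $x(p_i)\leq p_j$ for all $j\neq i$. I would then show the elements $x(p_1),\dots,x(p_n)$ "localize independently": $x(p_i)_{p_i}=x_{p_i}$ while $x(p_i)_{p_j}=1$ for $j\neq i$ (the latter because $x(p_i)\not\leq p_j$... wait — rather $x(p_i)\leq p_j$, so $x(p_i)_{p_j}$ need not be $1$; the correct statement is $x(p_i)\not\leq p_i$ gives $x(p_i)_{p_i}$ proper in $L_{p_i}$... ). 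The cleaner route: form the meet $c=\bigwedge_{i=1}^n x(p_i)$. Using Lemma \ref{4}, $(x:c)=\bigvee_{i=1}^n (x:x(p_i))$, and one checks this join is a "direct sum" decomposition — the summands $(x:x(p_i))$ are pairwise "comaximal enough" that their number is controlled. Alternatively, and more in Heinzer's spirit, I would argue by compactness: $x\leq b$ and $1$ is compact; if infinitely many maximal elements lay above $x$, a suitable join of the associated $x(p_i)$'s would equal $1$ without a finite subjoin doing so, contradicting compactness of $1$.

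The main obstacle I anticipate is condition $(ii)$: getting from "each $x(p_i)$ is small above $x$, not below $p_i$, but below every other $p_j$" to an actual finiteness conclusion requires either a telescoping/comaximality argument (show $x(p_i)\vee x(p_j)=1$ for $i\neq j$, then iterate to get $x(p_{i})\vee\bigwedge_{j\neq i}x(p_j)=1$, so the $x(p_i)$ generate an "independent family" over the compact element $1$) or a localization argument ($x$ compact forces $x_{p_i}$ proper in $L_{p_i}$ for each $i$, and one relates $\bigvee_i x(p_i)$ to $1$ via property $(ii)$ of localization: checking equality at each maximal element). Condition $(i)$ should follow in a line or two from the "moreover" part of Theorem \ref{5}. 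I would write $(i)$ first, then devote the bulk of the proof to the comaximality chain establishing $(ii)$.
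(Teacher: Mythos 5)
Your argument for condition $(i)$ is exactly the paper's: a second maximal element $m\neq p$ above the prime $r$ satisfies $r\leq m\not\leq p$, so the minimality of $r(p)=1$ forces $1\leq m$, a contradiction. For condition $(ii)$ you correctly isolate the two facts that drive the proof, namely $a(p_i)\not\leq p_i$ and (by minimality of $a(p_i)$, since $a\leq p_j$ and $p_j\not\leq p_i$) $a(p_i)\leq p_j$ for $j\neq i$, and you correctly sense that compactness of $1$ is the engine; that is the paper's route. But you stop short of the one step that makes it work: you assert that ``a suitable join of the associated $x(p_i)$'s would equal $1$'' without proving it. The paper takes $\Gamma$ to be the set of \emph{all} maximal elements above $a$ and shows $\bigvee_{m\in\Gamma}a(m)=1$: if this join were below some maximal element $n$, then $a\leq n$ would put $n$ in $\Gamma$, whence $a(n)\leq\bigvee_{m\in\Gamma}a(m)\leq n$, contradicting $a(n)\not\leq n$ from Theorem \ref{5}. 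Compactness of $1$ then yields a finite $\Lambda\subseteq\Gamma$ with $\bigvee_{m\in\Lambda}a(m)=1$, and your second fact finishes: any $n\in\Gamma-\Lambda$ lies above every $a(m)$ with $m\in\Lambda$, hence above $1$, which is absurd; so $\Gamma=\Lambda$ is finite.

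One of your proposed alternatives is actually false and should be discarded: you suggest showing $x(p_i)\vee x(p_j)=1$ for $i\neq j$. If a third maximal element $p_k$ lies above $x$, then by the same minimality argument both $x(p_i)$ and $x(p_j)$ lie below $p_k$, so their join is below $p_k$ and hence proper. The pairwise comaximality you want does not hold; what holds, and what the paper uses, is the global statement that the join over the whole family $\Gamma$ equals $1$. The localization detour is likewise unnecessary. In short, your skeleton for $(ii)$ is the right one, but the identity $\bigvee_{m\in\Gamma}a(m)=1$ is the missing keystone, and the comaximality variant would fail.
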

\begin{proof} 
$(i)$ Suppose  $0 < r\leq p \wedge m$ where $r$ is prime and 
$p$, $m$ are   distinct maximal elements. By Theorem \ref{5}, we get the   contradiction $1=r(p)\leq m$. 

$(ii)$ Let $a\in L-\{0,1\}$ and 
  $\Gamma$   the set of maximal elements above $a$. As $a(m)\not\leq m$ for each $m\in  \Gamma$, we get $\bigvee_{m\in  \Gamma} a(m) =1$, so $\bigvee_{m\in  \Lambda} a(m) =1$ for some finite subset $\Lambda$ of $\Gamma$, because $1$ is compact. Then $\Gamma=\Lambda$ is finite. Indeed, 
if $n\in \Gamma-\Lambda$, then  $n\geq \bigvee_{m\in  \Lambda} a(m) =1$, a contradiction.
\end{proof}

\begin{theorem}  \label{8} 
A lattice  domain  $L$ is divisorial iff it is h-local and $L_m$ is divisorial for each maximal element $m$. 
\end{theorem}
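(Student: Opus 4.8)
The plan is to prove both implications via the local--global principle in property $(ii)$ of the localization calculus (an element is determined by its localizations at the maximal elements), the essential ingredient being that for an h-local lattice domain the $v$-closure commutes with localization. First I would record that for each maximal element $m$ the lattice $L_m$ is again a lattice domain in our sense: it is a $C$-lattice by \cite{JJ}, it is principally generated since localization carries principal elements to principal elements and commutes with joins, and $0$ is prime in $L_m$ (indeed $0_m=0$) because $L$ is a principally generated domain. One may then form the $v$-closure inside $L_m$, and the heart of the argument is the assertion: if $L$ is h-local, $0\neq a\in L$ and $m\in Max(L)$, then $(a_v)_m=(a_m)_v$, the right-hand side computed in $L_m$.

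Granted this, the theorem follows quickly. For the forward implication, a divisorial $L$ is h-local by Theorem \ref{11}, so for every $0\neq a\in L$ and every maximal $m$ we get $(a_m)_v=(a_v)_m=a_m$; since every nonzero element of $L_m$ has the form $a_m$ with $0\neq a\in L$ and $0$ is divisorial by convention, $L_m$ is divisorial. For the converse, suppose $L$ is h-local with each $L_m$ divisorial; then for $0\neq a\in L$ and every maximal $m$ we have $(a_v)_m=(a_m)_v=a_m$, hence $a_v=a$ by property $(ii)$, so $L$ is divisorial.

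It remains to establish the assertion, which is the substantive part. Fix a nonzero principal element $x\leq a$. By h-locality only finitely many maximal elements $m_1,\dots,m_k$ lie above $x$; for any other maximal $n$ one has $x\not\leq n$, hence $a\not\leq n$ and $a_v\not\leq n$, so $(a_v)_n=(a_n)_v=1$ trivially, and the problem reduces to the $m_i$. At each such $m$ the point is to compute the colon $(x:a)$ locally. Property $(iii)$ gives $(x:a)_m\leq(x_m:a_m)$ for free, but since $a$ need not be compact this may in general be strict; what has to be shown is the reverse inequality $(x:a)_m=(x_m:a_m)$, equivalently that the meet $(x:a)=\bigwedge\{(x:p)\ |\ p\leq a,\ p\text{ principal}\}$ localizes correctly at $m$. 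I expect this to be exactly where h-locality is used in an essential way: each $(x:p)$ lies above the nonzero principal $x$, hence localizes to $1$ away from $m_1,\dots,m_k$, and combining this with the h-local comaximal decomposition of nonzero elements (the lattice analogue of results in \cite{He}) and the primeness of the $m_i$ should collapse the infinite meet to a controllable finite one. This colon-localization step is the main obstacle. Once $(x:b)_m=(x_m:b_m)$ is available for $x$ principal and $b\geq x$, applying it first with $b=a$ and then with $b=(x:a)$ yields
$$(a_v)_m=(x:(x:a))_m=(x_m:(x_m:a_m))=(a_m)_v,$$
as $x_m$ is a nonzero principal element of $L_m$ below $a_m$, and the proof is complete.
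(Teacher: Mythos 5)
Your overall strategy coincides with the paper's: both directions are reduced, via the local--global principle $(ii)$, to the single assertion that over an h-local lattice domain the $v$-closure commutes with localization, $(a_v)_m=(a_m)_v$ for every nonzero $a$ and every $m\in Max(L)$, with h-locality supplied in the forward direction by Theorem \ref{11}. The difference lies in what happens to that assertion. The paper disposes of it in one line by citing \cite[Proposition 19]{DE}, which is exactly the identity $(x:(x:a))_m=(x_m:(x_m:a_m))$ for a nonzero principal $x\leq a$ in an h-local lattice domain; you instead set out to prove it, correctly reduce it to the colon-localization statement $(x:b)_m=(x_m:b_m)$ for principal $x\leq b$, and then stop, saying that this step ``should'' follow from h-locality and calling it ``the main obstacle.''

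That is a genuine gap in the proof as written. The inequality $(x:b)_m\leq(x_m:b_m)$ comes for free from property $(iii)$, but the reverse inequality is precisely the nontrivial content of the cited proposition: one must show that the infinite meet $(x:b)=\bigwedge\{(x:p)\ |\ p\leq b,\ p \mbox{ principal}\}$ localizes termwise, and your observation that each $(x:p)$ lies above $x$ and hence trivializes away from the finitely many maximal elements over $x$ does not yet explain why the meet commutes with localization \emph{at} those finitely many maximal elements, which is where the real work (the h-local comaximal decomposition) happens. So your argument is complete modulo an unproved lemma which happens to be true and is exactly the external result the paper leans on; everything surrounding it --- the reduction, the treatment of $0$, the verification that nonzero elements of $L_m$ have the form $a_m$ with $a\neq 0$, and the dispatching of maximal elements not above $x$ --- is correct and matches the paper.
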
 
\begin{proof} 
$(\Rightarrow)$ $L$ is h-local by Theorem \ref{11}. Let $m\in Max(L)$ and $a\in L_m-\{0\}$. Pick a nonzero principal element $x\leq a$. By 
\cite[Proposition 19]{DE}, we have 
$$(x_m:(x_m:a_m)) = (x:(x:a))_m = a_m =a.$$ 
As $x_m$ is principal in $L_m$, $a$ is divisorial in $L_m$.

$(\Leftarrow)$ Let $a\in L-\{0\}$ and $x\leq a$ a nonzero principal element.
Let $m\in Max(L)$. As $L_m$ is divisorial and $L$ is h-local, \cite[Proposition 19]{DE} shows that
$$(a_v)_m= (x:(x:a))_m = (x_m:(x_m:a_m)) = a_m.$$ Thus  we have checked locally that $a_v = a$.
\end{proof}

\begin{definition} %\label{} 
Let $L$ be a lattice domain. Say that  $L$ is a {\em Pr\"ufer lattice} if all compact elements of $L$ are principal. Say that $L$ is {\em completely integrally closed} (resp. {\em integrally closed}) if  $(xc:c)=x$ for 
each nonzero principal element $x$ and each nonzero element (resp. each nonzero compact element) $c$. 

So an integral domain $D$ is  Pr\"ufer, completely integrally closed or integrally closed iff so is its ideal lattice respectively (see \cite[Theorems 34.3 and 34.7]{G}).
\end{definition}

It follows at once that a Pr\"ufer lattice is integrally closed and a Dedekind lattice is completely integrally closed.

\begin{lemma}  \label{9} 
Let $L$ be a divisorial lattice domain and $c\in L-\{0\}$  such that \begin{center} $(cz:c)=z$ for each nonzero principal element $z$.\end{center} Then $c$ is principal.
\end{lemma}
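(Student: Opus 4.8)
The hypothesis says that $c$ is "invertible against principal elements" in a certain sense: multiplying by $c$ and then dividing by $c$ recovers every nonzero principal element. I want to exploit this together with divisoriality to show $c$ itself is principal. The natural strategy is to produce a principal element $x$ with $x \le c$ and $(cx : c) = x$... no, more precisely I want a principal $x\le c$ such that $c = (x : (x : c)) $ collapses to something principal; since $L$ is divisorial, $c = c_v = (x:(x:c))$ for any nonzero principal $x \le c$. So it suffices to show $(x:(x:c))$ is principal for a suitable choice of $x$. The key idea: pick $x \le c$ principal and examine the element $(x:c)$. We have $x = (x:c)\cdot ?$... Let me set $d = (x:c)$. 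Then $d$ is divisorial by Proposition \ref{81}(ii), and $cd \le x$. I want to show $cd = x$, i.e. that $c$ behaves invertibly, and then $d$ is principal (since $cd = x$ is a nonzero principal element in a lattice domain forces both factors principal), and finally $c = (x:d)$ with $x,d$ principal would not immediately give $c$ principal — but $(x:d) = x(?:?)$... Actually if $cd = x$ with $d$ principal, then $c = (x:d)$; and $(x:d)$ with $x,d$ principal: write $d$ principal, $x = d d'$ for principal $d'$ only if $d \le x$, which need not hold. Instead, from $cd = x$ with $x$ principal and $d$ principal and nonzero, $c$ is principal directly by the cancellation/factorization fact quoted in the preamble ("if $L$ is a lattice domain and $xy$ is a nonzero principal element, then $x$ and $y$ are principal").

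So the whole proof reduces to the claim: \emph{there is a nonzero principal $x\le c$ with $(c(x:c)) = x$}, equivalently $(x:c)$ is "the inverse fractional ideal". To get this, I would argue as follows. Fix any nonzero principal $x_0 \le c$. Consider $d = (x_0 : c)$, so $cd \le x_0$, hence $cd$ is below the principal element $x_0$. I'd like to apply the hypothesis $(cz:c) = z$. The obstacle is that the hypothesis is stated only for principal $z$, while $d$ need not be principal. The trick: take a nonzero principal $z \le d$. Then $cz \le cd \le x_0$, and $cz$ is principal; by hypothesis $(cz : c) = z$. Now I want to relate $(x_0 : c)$ to such $z$'s. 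Write $x_0' := cd$; this is divisorial by Proposition \ref{81}(iv) (it equals... hmm, $cd$ with $d$ divisorial: $cd = c d_v$, and $(c\cdot d)_v = c d_v = cd$, so $cd$ is divisorial). Consider $(cd : c)$: on one hand $\ge d$; on the other, for every principal $z \le (cd:c)$ we have $cz \le cd$. I then want $cz \le cd$ together with the hypothesis to force $z \le d$, giving $(cd:c) = d$, so $cd$ is a divisorial element whose "colon by $c$" is $d$. That still isn't principality.

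Let me restart the reduction more cleanly. Since $L$ is divisorial, $c = (x : (x:c))$ for any nonzero principal $x \le c$. Set $J = (x:c)$. It suffices to show $J$ is principal and $cJ = x$: then $c = (x : J)$ but better, from $cJ = x$ principal and $c,J$ nonzero we conclude $c$ principal by the factorization fact. To show $cJ = x$: clearly $cJ \le x$. Suppose $cJ < x$; since $x$ is principal (hence compact) and everything is a join of principal elements, pick... actually use Remark \ref{3}: $cJ$ and $x$ are both... $cJ$ divisorial? $cJ = cJ_v$ but is $cJ$ divisorial — yes, $cJ = c(x:c)$ and by Prop \ref{81}(iv), $(c \cdot (x:c)_v) $... since $(x:c)$ is already divisorial, $c(x:c)$ is divisorial. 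And $x$ is principal hence divisorial, $cJ \le x$. If $cJ \ne x$, then by Remark \ref{3} (with the principal element $cJ \wedge x = cJ$ — wait need a principal element below both; $cJ$ need not be principal). Hmm. Alternative: I'll show $J$ is principal first. For every nonzero principal $z \le J = (x:c)$, we have $cz \le x$, so $cz$ is principal and $(cz : c) = z$ by hypothesis. Now claim $J = \bigvee\{z : z \le J \text{ principal}\}$ and in fact $cJ = \bigvee cz = \bigvee (cz)$; each $cz \le x$ principal. Then $(cJ : c) = (\bigvee cz : c) \ge \bigvee (cz:c) = \bigvee z = J$, and $\le$ is clear, so $(cJ:c) = J$. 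Set $e = cJ$, a divisorial element $\le x$. We have $(e : c) = J$ and I want $e = x$. Apply divisoriality to $e$: $e = (x : (x : e))$ since $e \le x$, $x$ principal. And $(x : e) = (x : cJ) = ((x:c) : J) = (J : J) \ge 1$, so $(x:e) = 1$, hence $e = (x : 1) = x$. Therefore $cJ = x$, a nonzero principal element, so by the factorization fact $c$ (and $J$) are principal. $\square$

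The main obstacle I anticipate is the step $cJ = \bigvee_{z\le J \text{ principal}} cz$ with each $cz$ principal and then $(cJ : c) = J$: this uses principal generation to reduce to principal $z$, uses the hypothesis only on principal elements (exactly as stated), and uses that $c$ distributes over arbitrary joins. After that, the identity $(x : cJ) = ((x:c):J) = (J:J) = 1$ and the divisorial collapse $e = (x:(x:e))$ do the rest; one must double-check $(x:c:J) = (J:J)$, i.e. $((x:c):J) = (J:J)$, which is immediate since $(x:c) = J$. I would write this up in the order: (1) fix principal $x \le c$, set $J = (x:c)$; (2) show $cJ = \bigvee cz$ over principal $z \le J$ and each $cz$ principal $\le x$; (3) deduce $(cJ:c) = J$; (4) compute $(x : cJ) = 1$ hence $cJ = x$ by divisoriality of $cJ$; (5) conclude $c$ principal by the factorization property of lattice domains.
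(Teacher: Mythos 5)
Your overall reduction is the right one, and it is the same as the paper's: show that $c(x:c)=x$ for a nonzero principal $x\leq c$ and then invoke the fact that a factor of a nonzero principal element in a lattice domain is principal. Steps (1)--(3) are essentially fine (a minor slip: $cz$ need not be principal, since $c$ is not yet known to be principal; but your use of the hypothesis only needs $z$ principal, so nothing breaks there). The fatal problem is step (4). The divisorial-closure identity $a=(w:(w:a))$ requires a nonzero \emph{principal element $w$ below $a$}; you apply it to $e=cJ$ with the principal element $x$ \emph{above} $e$. That identity is false in general: already in the ideal lattice of $\mathbb{Z}$, with $e=(4)\leq x=(2)$ one gets $(x:(x:e))=((2):\mathbb{Z})=(2)\neq(4)$. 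Worse, inside the paper's own Example \ref{17}, take $c=m$ and $x=a$: the hypothesis of the lemma fails for $m$, but your step (4) never uses it --- the computation $(x:cJ)=((x:c):J)=(J:J)=1$ is valid for \emph{every} nonzero $c$ and principal $x\leq c$ --- and it would yield $am=cJ=(a:1)=a$, which is false. Indeed, if step (4) were sound it would prove that every nonzero element of a divisorial lattice domain is invertible (hence principal), contradicting the nonprincipal maximal element of Example \ref{17}. So the conclusion $cJ=x$ is true but your justification of it is not.

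The repair is exactly what the paper does: instead of coloning out of $x$, choose a nonzero principal element that actually lies \emph{below} $cJ=c(x:c)$, namely $w=xy$ with $y\leq(x:c)$ principal. One computes, using Lemma \ref{2} and the hypothesis,
$$(xy:c(x:c))=((xy:(x:c)):c)=(y(x:(x:c)):c)=(yc:c)=y,$$
and only then applies divisoriality legitimately: $c(x:c)=(xy:(xy:c(x:c)))=(xy:y)=x$. Note where the hypothesis $(cz:c)=z$ enters --- in the step $(yc:c)=y$ --- which is precisely the ingredient your step (4) omitted.
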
 
\begin{proof} 
Let $x,y$ be nonzero principal elements such that $x\leq c$ and $y\leq (x:c)$. Using  the hypothesis and Lemma \ref{2} we get 
$$ (xy:c(x:c)) = ((xy:(x:c)):c)= (y(x:(x:c)):c) = (yc:c) = y.
$$
Hence
$$ c(x:c)  = (xy:(xy:c(x:c))) = (xy : y) = x
$$
thus $c$ is principal, cf. \cite[Theorem 7]{AJ}.
\end{proof}

From Lemma \ref{9}, we get at once:

\begin{theorem}  
A   completely integrally  closed lattice domain is divisorial iff it is Dedekind.
\end{theorem}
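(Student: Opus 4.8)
The plan is to read the biconditional as two quick implications, both of which are essentially already in hand. For the direction ``$\Leftarrow$'', if $L$ is Dedekind then every element of $L$ is principal, hence divisorial by Proposition~\ref{81}$(ii)$, so $L$ is divisorial; this was in fact already observed right after the definition of a divisorial lattice, so no new argument is required. (As a consistency check, a Dedekind lattice is indeed completely integrally closed, as noted above, so the hypothesis of the theorem is harmless here.)

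For the direction ``$\Rightarrow$'', assume $L$ is a completely integrally closed divisorial lattice domain and fix an arbitrary nonzero element $c\in L$. I would first observe that $c$ meets the hypothesis of Lemma~\ref{9}: by the definition of ``completely integrally closed'', for every nonzero principal element $z$ we have $(cz:c)=(zc:c)=z$. Since $L$ is divisorial, Lemma~\ref{9} applies and yields that $c$ is principal. As $c$ was an arbitrary nonzero element, every nonzero element of $L$ is principal; since $0$ is trivially principal and (in the nontrivial case) $1\neq 0$ is covered by the same argument, this says exactly that $L$ is a Dedekind lattice.

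I do not expect any genuine obstacle: the one point needing care is the matching of quantifiers — complete integral closedness is postulated for all nonzero principal $x$ and all nonzero $c$, which is precisely the input Lemma~\ref{9} consumes — and once this is noticed the conclusion is immediate, as the phrase ``we get at once'' in the text signals.
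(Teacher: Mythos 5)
Your proof is correct and follows exactly the paper's intended argument: the forward direction is a direct application of Lemma~\ref{9} (whose hypothesis is precisely the complete integral closedness condition), and the converse is the already-noted fact that a Dedekind lattice is divisorial. The paper compresses all of this into the phrase ``From Lemma~\ref{9}, we get at once,'' so your write-up is just a fuller version of the same proof.
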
 

Recall that a linearly ordered lattice domain is called a 
{\em valuation lattice}. It is well-known that a lattice domain $L$ is Pr\"ufer iff $L_m$ is a valuation lattice for each maximal element $m$.

\begin{lemma}  \label{14}
A valuation lattice $L$ is divisorial iff its maximal element $m$ is principal.
\end{lemma}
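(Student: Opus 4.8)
The plan is to prove both implications directly, using the total order of $L$ heavily. For the easy direction $(\Leftarrow)$: if the maximal element $m$ is principal, then every proper element $a$ satisfies $a \le m$, hence $a = m((m:a)\wedge 1)\cdots$ — more usefully, since $m$ is principal and $L$ is a $C$-lattice, one shows every nonzero element of $L$ is of the form $m^n$ or an intersection of such, but in fact the cleaner route is: a valuation lattice whose maximal element is principal has the property that every compact element is a power of $m$ (because the compact elements below $m$ but not below $m^2$ must equal $m$ by join-principality of $m$ and linearity), so $L$ is a Dedekind lattice, hence divisorial. Alternatively, and even more economically, I would just check divisoriality directly: given $a \ne 0$, pick a principal $x \le a$; since $L$ is linearly ordered, $(x:(x:a))$ is comparable to $a$, and using that $m$ is principal one rules out strict inequality by a cancellation argument on the powers of $m$.

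For the hard direction $(\Rightarrow)$, suppose $L$ is a divisorial valuation lattice; I want to show $m$ is principal. The natural approach is to apply Lemma \ref{9}: it suffices to show $(mz:m) = z$ for every nonzero principal element $z$. Equivalently, since $L$ is integrally closed? — no, we are not given that; we must extract it. So the real content is: show that in a divisorial valuation lattice, $(mz : m) = z$ for all nonzero principal $z$. Fix such a $z$. Always $z \le (mz:m)$. For the reverse, let $w \le (mz:m)$ be a nonzero principal element; then $wm \le zm$, and by cancellativity of the nonzero principal element $z$ (valid in any lattice domain) it is enough to show $w \le z$. Suppose not; then, by linearity, $z < w$, so $z$ is strictly below the principal element $w$, and $mz < mw \le mz$ forces... this needs care: $wm \le zm$ with $z < w$. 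Dividing by $m$ is exactly what we cannot do yet. Instead I would argue: $(zm : w) \ge m$ and, since $z \le w$ gives $(zm:w) \le (wm:w) = $ something — here is where I expect the main obstacle.

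The key difficulty is thus establishing the cancellation $(mz:m) \le z$, i.e. that the (possibly non-principal, non-compact) element $m$ behaves like a cancellative element in the relevant inequality, using only divisoriality and the total order. I anticipate the right tool is the divisorial closure together with Lemma \ref{2}(i) applied with $a = m$: for principal $x \le m$ we have $z(x:m) = (zx:m)$, and $m = (x:(x:m))$ when $m$ is divisorial. Then from $wm \le zm$ one deduces $(zx : wm) \ge (zx:zm)$, and unwinding $(zx:zm) = ((x:m):?)$ via Lemma \ref{2} should pin down $w \le z$. So the concrete plan is: (1) dispatch $(\Leftarrow)$ by showing $m$ principal forces $L$ Dedekind, hence divisorial; (2) for $(\Rightarrow)$, reduce to Lemma \ref{9} by proving $(mz:m)=z$ for nonzero principal $z$; (3) prove that reverse inequality by the linearity-plus-cancellation argument above, with Lemma \ref{2}(i) and the divisoriality of $m$ (itself a consequence of $L$ being a divisorial lattice) supplying the cancellation of $m$. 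The one genuinely delicate point, where I would slow down in the actual write-up, is step (3): making the "cancel $m$" step rigorous without circularly assuming $m$ is principal.
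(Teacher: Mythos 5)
Your proposal has genuine gaps in both directions. For $(\Leftarrow)$, your main route --- that a valuation lattice with principal maximal element is Dedekind --- is false: a rank-two valuation domain with value group $\mathbb{Z}\times\mathbb{Z}$ ordered lexicographically has principal maximal ideal (generated by an element of value $(0,1)$), yet the principal ideal generated by an element of value $(1,0)$ lies below $m^n$ for every $n$ without being zero, so it is a compact element that is not a power of $m$; your inference from ``every compact element below $m$ but not below $m^2$ equals $m$'' to ``every compact element is a power of $m$'' breaks down exactly here, and such domains are divisorial without being Dedekind --- that is the whole point of the lemma. Your fallback (``rule out strict inequality by a cancellation argument on the powers of $m$'') is not an argument. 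The paper's proof of this direction runs differently and uses no powers of $m$: if $a$ is nonzero and nondivisorial, pick a principal $x$ with $a<x\le a_v$ (possible since $a_v$ is a join of principal elements and the order is total); then $a_v\le x_v=x$ because nonzero principal elements are divisorial, so $a_v=x$; meet-principality gives $a=x(a:x)$, Proposition \ref{81}(iv) gives $x=a_v=x(a:x)_v$, cancelling the nonzero principal $x$ yields $(a:x)_v=1$, and this contradicts $(a:x)\le m=m_v$ --- the only place where principality of $m$ (hence its divisoriality) is used.

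For $(\Rightarrow)$ you correctly identify that the entire difficulty in your plan is the cancellation $(mz:m)\le z$, and you stop precisely there, so the proof is incomplete at its only hard point; establishing that inequality without already knowing $m$ is principal amounts to excluding $m=tm$ for a principal $t<1$, which is not easier than the lemma itself. The paper sidesteps Lemma \ref{9} entirely: since $m$ is divisorial, Proposition \ref{12} expresses $m$ as a meet of elements $(x:y)$ with $x,y$ principal and $m\le(x:y)$; by maximality each such $(x:y)$ is $m$ or $1$, and they cannot all be $1$, so $m=(x:y)$ for some principal $x,y$; finally $(x:y)$ is itself principal (in a valuation lattice $x$ and $y$ are comparable: if $y\le x$ then $(x:y)=1$, while if $x\le y$ then $x=y(x:y)$ is a nonzero principal product in a lattice domain, forcing $(x:y)$ to be principal). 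You should adopt this route, or else supply the missing cancellation step in full.
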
 
\begin{proof} 
$(\Rightarrow)$ 
By Proposition \ref{12}, $m=(x:y)$ for some principal elements $x,y$. So $m$ is principal.
$(\Leftarrow)$ Suppose that $a$ is a nondivisorial element and pick a principal element $a < x \leq a_v$. 
We get  $x=a_v=(x(a:x))_v = x(a:x)_v
$, hence $1=(a:x)_v\leq m_v = m$, a contradiction.

\end{proof}

\begin{theorem}  \label{10}
An integrally closed lattice domain $L$ is divisorial iff
$L$ is   Pr\"ufer, h-local   and its maximal elements are principal.
\end{theorem}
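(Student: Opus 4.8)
\medskip

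The plan is to prove both implications by reducing everything to the maximal-local pieces via Theorem \ref{8}, and then to invoke Lemma \ref{14} together with the characterization ``Pr\"ufer iff every localization is a valuation lattice.'' First I would establish the easy direction $(\Leftarrow)$: assume $L$ is Pr\"ufer, h-local, with principal maximal elements. For each $m\in Max(L)$, $L_m$ is a valuation lattice (by the recalled characterization of Pr\"ufer lattices), and its maximal element is $m_m$, which is principal in $L_m$ because $m$ is principal in $L$ and localization sends principal elements to principal elements. Hence by Lemma \ref{14} each $L_m$ is divisorial, and since $L$ is h-local, Theorem \ref{8} gives that $L$ is divisorial. (Here I should also record that $L$ is integrally closed, which is automatic: a Pr\"ufer lattice is integrally closed, as noted right after the definition — so the hypothesis ``integrally closed'' is consistent but redundant on this side.)

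\medskip

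For the forward direction $(\Rightarrow)$, suppose $L$ is integrally closed and divisorial. By Theorem \ref{11}, $L$ is h-local, which is one of the three desired conclusions. Next I would show $L$ is Pr\"ufer. The natural route is to prove $L_m$ is a valuation lattice for each maximal element $m$, and then conclude by the recalled equivalence. To do this, fix $m$ and note that $L_m$ is again a divisorial lattice domain (this follows from Theorem \ref{8}, which asserts precisely that each $L_m$ is divisorial when $L$ is). I would also check that $L_m$ is integrally closed: since $L$ is integrally closed and the compact elements of $L_m$ are exactly the $c_m$ with $c\in L^*$ (fact $(iv)$ of the localization theory), and localization commutes with the residuation $(xc:c)$ for compact $c$ by fact $(iii)$, the identity $(xc:c)=x$ descends to $L_m$. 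So $L_m$ is a divisorial, integrally closed, \emph{local} lattice domain.

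\medskip

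The crux is therefore the local statement: \emph{a divisorial, integrally closed, quasilocal lattice domain is a valuation lattice} (equivalently, its compact elements are principal). This is where I expect the main obstacle to lie. The idea is to mimic Heinzer's argument for integral domains: in a quasilocal integrally closed domain that is divisorial, one shows any two nonzero principal elements $x,y$ are comparable, which forces the value monoid to be totally ordered. Concretely, in $L_m$ consider nonzero principal $x,y$ and look at $(x:y)$ and $(y:x)$; integral closedness (applied with the compact element $y$, say) combined with divisoriality of the relevant elements should yield that one of these residuals is $1$, i.e. $y\leq x$ or $x\leq y$. The delicate point is that ``integrally closed'' here only gives $(yc:c)=c$-cancellation for \emph{compact} $c$, so one must be careful to only apply it to compact (indeed principal) elements; divisoriality is what lets us recover global comparisons from residuals (Remark \ref{3}). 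Once $L_m$ is a valuation lattice for every $m$, $L$ is Pr\"ufer. Finally, to get that the maximal elements of $L$ are principal: for each $m$, $L_m$ is a divisorial valuation lattice, so by Lemma \ref{14} its maximal element $m_m$ is principal in $L_m$; one then pushes this down — using $(iv)$ to write $m_m = c_m$ for a compact $c\in L^*$ below $m$, and h-locality plus the fact that $c$ is principal outside $m$ (being a maximal element, $m$ is the only maximal element above it once we know $m = \bigwedge\{\dots\}$ locally) — to conclude $m = c$ is principal in $L$. This last descent step, reconciling the local principality of $m_m$ with global principality, is the second place where care is needed, and I would organize it as a short lemma on recognizing principal elements locally (appealing to \cite[Theorem 7]{AJ} or \cite{DE} as in Lemma \ref{9} and Theorem \ref{8}).
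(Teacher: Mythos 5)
Your backward direction and the h-local part of the forward direction match the paper (Theorem \ref{8} plus Lemma \ref{14}, and Theorem \ref{11} respectively), but the two substantive steps of the forward direction are left as sketches at exactly the points where the real work lies, and the tools you gesture at are not the ones that close them. For the Pr\"ufer part you propose to localize and then prove that a quasilocal integrally closed divisorial lattice domain is a valuation lattice by ``mimicking Heinzer's argument'' so that integral closedness ``should yield'' that one of $(x:y)$, $(y:x)$ is $1$; this is precisely the assertion that needs proof, and no argument is supplied. The paper does not localize at all here: it takes an arbitrary nonzero compact $c$, observes that integral closedness gives $(cz:c)=z$ for every nonzero principal $z$, and then Lemma \ref{9} (whose proof is a concrete computation with $c(x:c)$ and divisorial closures) concludes that $c$ is principal. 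That single application of Lemma \ref{9} is the missing engine of your argument; even your comparability claim for principal elements in $L_m$ ultimately reduces to it.

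The second gap is the descent step for principality of a maximal element $m$. Knowing that $m_m$ is principal in $L_m$ and $m_n=1$ for $n\neq m$ only says that $m$ is \emph{locally} principal, and locally principal does not imply principal without some compactness input; you acknowledge that ``care is needed'' but do not produce the lemma you would need, and it is not clear one is available for $m$ a priori. The paper sidesteps this entirely: by Proposition \ref{12} and maximality, $m=(x:y)$ for specific principal $x,y$; since $L$ is already known to be Pr\"ufer, $z=x\vee y$ is principal, so $x=zx'$, $y=zy'$ with $x'\vee y'=1$, whence $(x:y)=(x':y')=x'$ is principal. So the principality of $m$ is obtained globally and explicitly, not by local-to-global descent. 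In short: your outline identifies the right intermediate claims but defers both hard steps, and the paper's proofs of those steps go through Lemma \ref{9} and Proposition \ref{12} rather than through localization.
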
 
\begin{proof} 
$(i)\Rightarrow (ii)$.
By Theorem \ref{11}, $L$ is  h-local. Let $c\in L-\{0\}$ be a compact element. As $L$ is integrally closed and divisorial, Lemma \ref{9} shows that $c$ is principal. Hence $L$ is a  Pr\"ufer lattice. 
Let $m$ be a maximal element. By Proposition \ref{12}, $m=(x:y)$ for some principal elements $x,y$. So $m$ is principal. Indeed $x\vee y$ is a principal element $z$, so $x=zx'$ and $y=zy'$ for some principal elements $x',y'$,   cf. \cite[Theorem 7]{AJ}. So 
$z = zx'\vee zy'=z(x'\vee y')$, hence $x'\vee y'=1$, thus $(x:y)=(x':y')=x'$.
$(ii)\Rightarrow (i)$ follows from Theorem \ref{8} and Lemma \ref{14}.
\end{proof}

\begin{remark}
 Let $L$ be a Pr\"ufer lattice. Then $L$ is modular because it is locally totally ordered. By \cite[Theorem 3.4]{A}, $L$ is isomorphic to the ideal lattice  of some Pr\"ufer integral domain. It follows that a divisorial  integrally closed lattice domain  is isomorphic to the ideal lattice  of some divisorial  Pr\"ufer domain.
\end{remark}

\end{document}